\theoremstyle{plain}
\newtheorem{theorem}{Theorem}
\newtheorem{lemma}{Lemma}
\newcommand{\som}[2]{\bigl\langle\begin{smallmatrix} {#1} \\ {#2}\end{smallmatrix}\bigr\rangle}
\newcommand{\bigsom}[2]{\left\langle\begin{smallmatrix} {#1} \\ {#2}\end{smallmatrix}\right\rangle}
\newcommand{\somm}[3]{\left\langle\begin{smallmatrix} {#1} \\ {#2} \\ {#3}\end{smallmatrix}\right\rangle}
\newcommand{\comp}{\raisebox{0.2mm}{\ensuremath{\scriptstyle{\circ}}}}
\newcommand{\defn}{\textbf}
\newcommand{\meet}{\wedge}
\newcommand{\join}{\vee}
\newcommand{\tensor}{\diamond}
\renewcommand{\Im}{\ensuremath{\mathrm{Im}}}
\renewcommand{\ker}{\ensuremath{\mathrm{ker}}}
\newcommand{\Ker}{\ensuremath{\mathrm{Ker}}}
\def\pullback{
 \ar@{-}[]+R+<6pt,-1pt>;[]+RD+<6pt,-6pt>%
 \ar@{-}[]+D+<1pt,-6pt>;[]+RD+<6pt,-6pt>}
\def\splitpullback{%
 \ar@{-}[]+R+<6pt,-.51ex>;[]+RD+<6pt,-6pt>%
 \ar@{-}[]+D+<.51ex,-6pt>;[]+RD+<6pt,-6pt>}
\begin{document}

\date{\today}

\author{Nelson Martins-Ferreira}
\author{Tim Van~der Linden}

\thanks{The first author was supported by IPLeiria/ESTG-CDRSP and Funda\c c\~ao para a Ci\^encia e a Tecnologia (grants SFRH/BPD/4321/2008 and PTDC/MAT/120222/2010)}
\thanks{The second author works as \emph{charg\'e de recherches} for Fonds de la Recherche Scientifique--FNRS. His research was supported by Centro de Matem\'atica da Universidade de Coimbra and by Funda\c c\~ao para a Ci\^encia e a Tecnologia (grant SFRH/BPD/38797/2007). He wishes to thank CMUC, IPLeiria and UCT for their kind hospitality during his stays in Coimbra, in Leiria and in Cape Town}

\email{martins.ferreira@ipleiria.pt}
\email{tim.vanderlinden@uclouvain.be}

\address{Departamento de Matem\'atica, Escola Superior de Tecnologia e Gest\~ao, Centro para o Desenvolvimento R\'apido e Sustentado do Produto, Instituto Poli\-t\'ecnico de Leiria, Leiria, Portugal}
\address{CMUC, Department of Mathematics, University of Coimbra, 3001--454 Coimbra, Portugal}
\address{Institut de recherche en math\'ematique et physique, Universit\'e catholique de Louvain, chemin du cyclotron~2 bte~L7.01.02, B-1348 Louvain-la-Neuve, Belgium}

\dedicatory{Dedicated to George Janelidze on the occasion of his sixtieth birthday}

\keywords{Semi-abelian, finitely cocomplete homological category; weighted, higher-order commutator; co-smash product}

\subjclass[2010]{18E10, 08A30}

\title[A decomposition formula for the weighted commutator]{A decomposition formula for\\ the weighted commutator}

\begin{abstract}
We decompose the weighted subobject commutator of M.~Gran, G.~Janelidze and A.~Ursini as a join of a binary and a ternary commutator.
\end{abstract}

\maketitle

In their article~\cite{GJU}, M.~Gran, G.~Janelidze and A.~Ursini introduce a \emph{weighted normal commutator} which, depending on the chosen weight, captures classical commutators such as the Huq commutator~\cite{Huq, Bourn-Huq, Borceux-Bourn} and the Smith commutator~\cite{Smith, Pedicchio, Bourn-Gran-Maltsev, Borceux-Bourn}. It is constructed as the normal closure of a so-called \emph{weighted subobject commutator}. We show how this latter commutator may be decomposed as a join of a binary and a ternary commutator~\cite{HVdL, Actions} defined in terms of co-smash products~\cite{Smash}. We moreover explain that the corresponding concept of \emph{weighted centrality} of arrows can be expressed in terms of the  \emph{admissibility} of certain diagrams in the first author's sense~\cite{MF-PhD}. 

\subsection*{The weighted subobject commutator}\label{The weighted subobject commutator}
In a finitely cocomplete homological category~\cite{Borceux-Bourn, Janelidze-Marki-Tholen}, a \defn{weighted cospan} is a triple of morphisms
\begin{equation}\label{weighted cospan}
\vcenter{\xymatrix@!0@=4em{& W \ar[d]^-{w} \\
X \ar[r]_-{x} & D & Y \ar[l]^-{y}}
}\end{equation}
in which $(x,y)$ plays the role of cospan and $w$ is the weight. Consider the pullback
\[
\vcenter{\xymatrix@!0@R=4em@C=4em{& W+Y \ar[rd]^-{\som{1_{W}}{0}}\\
(W+X)\times_W(W+Y) \ar[ru]^-{\pi_2} \ar[rd]_-{\pi_1} && W \\
& W+X \ar[ru]_-{\som{1_{W}}{0}}}}
\]
and the induced outer diagram
\[
\vcenter{\xymatrix@!0@R=4em@C=15em{W+X \ar[r]^-{\bigl\langle 1_{W+X},\iota_{W}\circ\som{1_{W}}{0}\bigr\rangle} \ar[rd]_-{\som{w}{x}} & (W+X)\times_W(W+Y)
\ar@{.>}[d]^-{\varphi} & W+Y \ar@<-.5ex>[l]_-{\bigl\langle\iota_{W}\circ\som{1_{W}}{0},1_{W+Y}\bigr\rangle} \ar[ld]^-{\som{w}{y}}\\
& D.}}
\]
In~\cite{GJU} the morphisms $x$ and $y$ are said to \defn{commute over $w$} if and only if there exists a dotted arrow $\varphi$ (called an \defn{internal multiplication}) such that the above diagram is commutative.

As explained in~\cite{GJU}, taking $W=0$ captures commuting pairs in the Huq sense ($x$ and $y$ commute over~$0$ if and only if they Huq-commute), and $w=1_{D}$ captures centralising equivalence relations in the Smith sense (the respective normalisations~$x$ and~$y$ of two equivalence relations~$R$ and~$S$ on~$D$ commute over~$1_{D}$ if and only if~$R$ and $S$ Smith-commute).

Now consider the canonical comparison morphism
\[
\left\langle\begin{smallmatrix} \iota_{W} & \iota_{W}\\
\iota_{X} & 0\\
0 & \iota_{Y}\end{smallmatrix}\right\rangle\colon W+X+Y \to (W+X)\times_W(W+Y)
\]
which, being a regular epimorphism~\cite{GJU} as the comparison between a sum and a product in the category of points over an object $W$ in a regular Mal'tsev category, induces a short exact sequence
\begin{equation}\label{Kernel K}
\xymatrix@=3em{0 \ar[r] & K \ar@{{ |>}->}[r] & W+X+Y \ar@{-{ >>}}[r]^-{\left\langle\begin{smallmatrix} \iota_{W} & \iota_{W}\\
\iota_{X} & 0\\
0 & \iota_{Y}\end{smallmatrix}\right\rangle} & (W+X)\times_W(W+Y) \ar[r] & 0.}
\end{equation}
The \defn{$(W,w)$-weighted subobject commutator $\kappa\colon{[(X,x),(Y,y)]_{(W,w)}\to D}$ of $x$ and $y$} is the direct image of $K$ along the induced arrow to $D$ as in
\[
\xymatrix{K \ar@{{ |>}->}[r] \ar@{.{ >>}}[d] & W+X+Y \ar@{->}[d]^-{\left\langle\begin{smallmatrix} w\\
x\\
y\end{smallmatrix}\right\rangle}\\
[(X,x),(Y,y)]_{(W,w)} \ar@{{ >}.>}[r]_-{\kappa} & D.}
\]
It is clear from the exactness of the above sequence that $x$ and $y$ commute over $w$ if and only if $[(X,x),(Y,y)]_{(W,w)}$ vanishes.

The normal closure of $\kappa$ is called the \defn{$(W,w)$-weighted normal commutator} of $x$ and $y$ and denoted by $N[(X,x),(Y,y)]_{(W,w)}$.

\subsection*{Admissibility}
In order to analyse the weighted subobject commutator in terms of the binary and ternary commutators considered in~\cite{HVdL, Actions}, we pass via an intermediate notion from~\cite{MF-PhD}. An \defn{admissibility diagram} is a diagram of shape
\begin{equation}\label{adm}
\vcenter{\xymatrix@!0@=4em{A \ar@<.5ex>[r]^-{f} \ar[rd]_-{\alpha} & B
\ar@<.5ex>[l]^-{r}
\ar@<-.5ex>[r]_-{s}
\ar[d]^-{\beta} & C \ar@<-.5ex>[l]_-{g} \ar[ld]^-{\gamma}\\
& D}}
\end{equation}
with $f\comp r=1_{B}=g\comp s$ and $\alpha\comp r=\beta=\gamma\comp s$. Note that by taking the pullback of $f$ with $g$, any admissibility diagram such as~\eqref{adm} may be extended to
\begin{equation*}\label{kite}
\vcenter{\xymatrix@!0@=3em{ & C \ar@<.5ex>[ld]^-{e_2} \ar@<-.5ex>[rd]_-{g}
\ar@/^/[rrrd]^-{\gamma} \\
A\times_{B}C \ar@<.5ex>[ru]^-{\pi_2}
\ar@<-.5ex>[rd]_-{\pi_1} && B \ar@<.5ex>[ld]^-{r} \ar@<-.5ex>[lu]_-{s}
 \ar[rr]|-{\beta} && D\\
& A \ar@<.5ex>[ru]^-{f} \ar@<-.5ex>[lu]_-{e_1} \ar@/_/[urrr]_-{\alpha}}}
\end{equation*}
in which the pullback square is a double split epimorphism.

The triple $(\alpha,\beta,\gamma)$ is said to be \defn{admissible with respect to $(f,r,g,s)$} if there is a (necessarily unique) morphism $\varphi\colon{A\times_{B}C\to D}$ such that $\varphi\comp e_{1}=\alpha$ and $\varphi\comp e_{2}=\gamma$.

\subsection*{Commuting pairs in terms of admissibility}
It is immediately clear from the definitions that the morphisms $x$ and $y$ commute over $w$ if and only if the triple $\bigl(\som{w}{x},w,\som{w}{y}\bigr)$ is admissible with respect to $\bigl(\som{1_{W}}{0},\iota_{W},\som{1_{W}}{0},\iota_{W}\bigr)$ as in the diagram
\begin{equation}\label{weight}
\vcenter{\xymatrix@!0@=5em{W+X \ar@<.5ex>[r]^-{\som{1_{W}}{0}} \ar[rd]_-{\som{w}{x}} & W
\ar@<.5ex>[l]^-{\iota_{W}}
\ar@<-.5ex>[r]_-{\iota_{W}}
\ar[d]^-{w} & W+Y \ar@<-.5ex>[l]_-{\som{1_{W}}{0}} \ar[ld]^-{\som{w}{y}}\\
& D.}}
\end{equation}

\subsection*{Admissibility in terms of commuting pairs}
Consider a diagram~\eqref{adm} and the induced weighted cospan
\begin{equation*}\label{Normalisation}
\vcenter{\xymatrix@=3em{& B \ar[d]^-{\beta}\\
X=\Ker(f) \ar[r]_-{\alpha\circ \ker(f)} & D & \Ker(g)=Y. \ar[l]^-{\gamma\circ \ker(g)}}}
\end{equation*}
We claim that the triple $(\alpha,\beta,\gamma)$ is admissible with respect to $(f,r,g,s)$ if and only if $x=\alpha\comp \ker(f)$ and $y=\gamma\comp \ker(g)$ commute over $w=\beta\colon {W=B\to D}$. To see this, it suffices to compare Diagram~\eqref{adm} with the induced Diagram~\eqref{weight}. In fact there is a regular epimorphism of admissibility diagrams from the latter to the former which keeps~$D$ fixed and makes
\[
\vcenter{\xymatrix@!0@=5em{B+X \ar@<.5ex>[r]^-{\som{1_{B}}{0}} \ar@{-{ >>}}[d]_-{\som{r}{\ker(f)}} & B
\ar@<.5ex>[l]^-{\iota_{B}}
\ar@<-.5ex>[r]_-{\iota_{B}}
\ar@{=}[d] & B+Y \ar@<-.5ex>[l]_-{\som{1_{B}}{0}} \ar@{-{ >>}}[d]^-{\som{s}{\ker(g)}}\\
A \ar@<.5ex>[r]^-{f} & B
\ar@<.5ex>[l]^-{r}
\ar@<-.5ex>[r]_-{s} & C \ar@<-.5ex>[l]_-{g}}}
\]
commute. This already proves the ``only if'' in our claim. For the ``if'' suppose that~$x$ and $y$ commute over $\beta$. For the induced arrow
\[
\varphi\colon{(B+X)\times_{B}(B+Y)\to D}
\]
to factor over the regular epimorphism
\[
\som{r}{\ker(f)}\times_{B}\som{s}{\ker(g)}\colon (B+X)\times_{B}(B+Y)\to A\times_{B}C,
\]
we only need that it vanishes on $\Ker\bigl(\som{r}{\ker(f)}\bigr)\times\Ker\bigl(\som{s}{\ker(g)}\bigr)$. This does indeed happen, as
\begin{align*}
\varphi\comp \bigl(\ker\bigl(\som{r}{\ker(f)}\bigr)\times\ker\bigl(\som{s}{\ker(g)}\bigr)\bigr)\comp \langle 1, 0\rangle
&= \varphi\comp \bigl\langle1_{B+X},\iota_{B}\comp \som{1_{B}}{0}\bigr\rangle\comp\ker\bigl(\som{r}{\ker(f)}\bigr)\\
&=\som{\beta}{x}\comp \ker\bigl(\som{r}{\ker(f)}\bigr)\\
&=\alpha\comp \som{r}{\ker(f)}\comp \ker\bigl(\som{r}{\ker(f)}\bigr)
\end{align*}
is trivial. Similarly, one can check that the arrow 
\[
\varphi\comp \bigl(\ker\bigl(\som{r}{\ker(f)}\bigr)\times\ker\bigl(\som{s}{\ker(g)}\bigr)\bigr)\comp \langle 0, 1\rangle
\]
is trivial.

\subsection*{Binary and ternary Higgins commutators}
If $k\colon K \to X$ and $l\colon L \to X$ are subobjects of an object $X$ in a finitely cocomplete homological category, then the \defn{(Higgins) commutator}
 ${[K,L]\leq X}$ is the image of the induced morphism
\[
\xymatrix@=2em{K\tensor L \ar@{{ |>}->}[r]^-{\iota_{K,L}} & K+L \ar[r]^-{\bigsom{k}{l}} & X,}
\]
where
\[
K\tensor L=\Ker\bigl(\left\langle\begin{smallmatrix} 1_{K} & 0 \\
0 & 1_{L}\end{smallmatrix}\right\rangle
\colon K+L\to K\times L\bigr).
\]
As explained in~\cite{GJU}, the Higgins commutator is another special case of the weighted subobject commutator recalled above. This commutator was first introduced in~\cite{Actions, MM-NC}. Higher-order versions of it exist and are studied in~\cite{HVdL, Actions}. 

The object $K\tensor L$, as the ${K\tensor L\tensor M}$ below, is an example of a \defn{co-smash product}~\cite{Smash}. It is worth recalling form~\cite{MM-NC} that it may be computed as the intersection $K\flat L\meet L\flat K$, where the object $K\flat L$ from~\cite{BJK} is the kernel in the split exact sequence
\[
\xymatrix{0 \ar[r] & K\flat L \ar@{{ |>}->}[r] & K+L \ar@<.5ex>@{-{ >>}}[r]^{\som{1_{K}}{0}} & K \ar[r] \ar@<.5ex>[l]^-{\iota_{K}} & 0.}
\]
Furthermore, also the sequence
\begin{equation}\label{Smash as kernel of bemol}
\xymatrix{0 \ar[r] & K\diamond L \ar@{{ |>}->}[r] & K\flat L \ar@<.5ex>@{-{ >>}}[r] & L \ar[r] \ar@<.5ex>[l] & 0}
\end{equation}
is split exact.

If $m\colon M \to X$ is another subobject of $X$, then the \defn{ternary commutator} $[K,L,M]\leq X$ is defined as the image of the composite
\[
\xymatrix@=3em{K\tensor L\tensor M \ar@{{ |>}->}[r]^-{\iota_{K,L,M}} & K+L+M \ar[r]^-{\somm{k}{l}{m}} & X,}
\]
where $\iota_{K,L,M}$ is the kernel of the morphism
\[
\xymatrix@=8em{K+L+M \ar[r]^-{\left\langle\begin{smallmatrix} i_{K} & i_{K} & 0 \\
i_{L} & 0 & i_{L}\\
0 & i_{M} & i_{M}\end{smallmatrix}\right\rangle} & (K+L)\times (K+M) \times (L+M).}
\]
It is well known that co-smash products are not associative, in general; furthermore, ternary co-smash products or commutators need not be decomposable into iterated binary ones: see~\cite{Smash, Actions, HVdL}.

\begin{theorem}\label{Thm GJU}
Consider a weighted cospan~\eqref{weighted cospan} such that $x$ and $y$ are normal monomorphisms (= kernels) in a finitely cocomplete homological category. Then $x$ and $y$ commute over $w$ precisely when the commutators $[X,Y]$ and $[X,Y,\Im(w)]$ vanish.
\end{theorem}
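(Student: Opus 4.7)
My plan is to prove the sharper statement that, as subobjects of $D$,
\[
[(X,x),(Y,y)]_{(W,w)} = [X,Y] \join [X,Y,\Im(w)].
\]
From this the theorem follows at once, since $x$ and $y$ commute over $w$ iff the left-hand side vanishes (as noted just after the definition of the weighted subobject commutator), and a join of subobjects vanishes iff every summand does.

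The inclusion $\geq$ is the easy direction and already gives the \emph{only if} half. The plan is to show that the two canonical monomorphisms $X\diamond Y \hookrightarrow X+Y\hookrightarrow W+X+Y$ and $\iota_{W,X,Y}\colon W\diamond X\diamond Y \hookrightarrow W+X+Y$ both factor through the kernel $K$ of~\eqref{Kernel K}. For the first, the projections $\pi_{1}\colon W+X+Y\to W+X$ and $\pi_{2}\colon W+X+Y\to W+Y$ restrict on $X+Y$ to $\iota_{X}\comp \som{1_{X}}{0}$ and $\iota_{Y}\comp \som{0}{1_{Y}}$, which kill $X\flat Y$ and $Y\flat X$ respectively, hence kill $X\diamond Y = X\flat Y\meet Y\flat X$. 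For the second, by construction the composite of $\iota_{W,X,Y}$ with each of the three canonical projections $W+X+Y\to W+X$, $W+X+Y\to W+Y$ and $W+X+Y\to X+Y$ is zero, and the first two already suffice to factor $\iota_{W,X,Y}$ through $K$. Postcomposing with $\somm{w}{x}{y}$ identifies the two images in $D$ with $[X,Y]$ (directly from the definition of the Higgins commutator, using that $x$ and $y$ are monomorphisms) and with $[X,Y,\Im(w)]$ (after factoring $w$ as $W\twoheadrightarrow \Im(w)\hookrightarrow D$ and using bifunctoriality of $\diamond$, which yields a regular epimorphism $W\diamond X\diamond Y \twoheadrightarrow \Im(w)\diamond X\diamond Y$).

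For the converse inclusion $\leq$, which supplies the \emph{if} half, I would dissect
\[
K = ((W+X)\flat Y)\meet((W+Y)\flat X)
\]
inside $W+X+Y$ by means of the decomposition results for iterated $\flat$-kernels in terms of co-smash products from~\cite{HVdL,Actions,Smash}. The goal is to show that, once the contributions of $X\diamond Y$ and $W\diamond X\diamond Y$ are stripped away, every remaining piece of $K$ maps into $[X,Y]\join[X,Y,\Im(w)]$ under $\somm{w}{x}{y}$. The hypothesis that $x$ and $y$ are normal monomorphisms enters essentially here: it permits reduction to the quotients $D/X$ and $D/Y$, where contributions involving only $W$ are controlled by $\Im(w)$ and no additional commutator term survives.

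The main obstacle is precisely this last decomposition. In groups the question reduces to showing that the intersection of the normal closures of $X$ and $Y$ inside $W\ast X\ast Y$ is generated, modulo projection to $D$, by binary commutators $[X,Y]$ and ternary ones $[X,Y,W]$, which can be checked by direct commutator calculus. In the general finitely cocomplete homological setting, extracting an analogous regular-epimorphism decomposition of $K$ will require a delicate combination of the bilinearity of the co-smash product with the normal-mono hypothesis, and this is where I expect to spend the most effort.
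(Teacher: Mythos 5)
Your ``only if'' half is correct and complete: the verification that $X\tensor Y$ and $W\tensor X\tensor Y$ both factor through $K=((W+X)\flat Y)\meet((W+Y)\flat X)$ and map onto $[X,Y]$ and $[X,Y,\Im(w)]$ in $D$ is exactly right. The ``if'' half, however, is left as a programme rather than a proof, and the programme points in the wrong direction. What you need is that the two subobjects you exhibit do not merely land in $K$ but \emph{cover} it: concretely, there is a split short exact sequence
\[
0 \to X\tensor Y\tensor W \to K \to X\tensor Y \to 0,
\]
so that the kernel and the section are jointly strongly epimorphic onto $K$ and there are no ``remaining pieces'' to strip away at all. This is precisely the step you flag as the main obstacle, and it is the real content of the argument; without it you have only one inclusion. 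It follows from the Hartl--Loiseau decomposition $(X+Y)\diamond W\cong\bigl((X\diamond Y\diamond W)\rtimes(X\diamond W)\bigr)\rtimes(Y\diamond W)$ (Lemma~2.12 in~\cite{HVdL}) by a three-dimensional $3\times 3$ diagram chase, and---importantly---it is a statement about the coproduct $W+X+Y$ alone. In particular your expectation that the normal-mono hypothesis ``enters essentially here'', via passage to $D/X$ and $D/Y$, is mistaken: the identity $[(X,x),(Y,y)]_{(W,w)}=[X,Y]\join[X,Y,\Im(w)]$ holds for an \emph{arbitrary} weighted cospan (this is Theorem~\ref{Thm Decomposition}), and hunting for a use of normality in the decomposition of $K$ will lead you astray.

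For comparison, the paper proves Theorem~\ref{Thm GJU} by a genuinely different route: it translates ``$x$ and $y$ commute over $w$'' into admissibility of the triple $\bigl(\som{w}{x},w,\som{w}{y}\bigr)$ and then invokes Lemma~4.5 of~\cite{HVdL}; there the normality of $x$ and $y$ is actually used, namely to identify $x$ with the image of $\som{w}{x}\comp\ker\bigl(\som{1_{W}}{0}\bigr)$ by factoring through the cokernel of $x$. Your strategy of deducing the theorem from the subobject-level identity is legitimate and would in fact yield a stronger, hypothesis-free statement---but only once the covering of $K$ by $X\tensor Y$ and $X\tensor Y\tensor W$ is actually established.
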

\begin{proof}
First of all we show that $x$ and~$y$ coincide with the images of $\som{w}{x}\comp \ker\bigl(\som{1_{W}}{0}\bigr)$ and $\som{w}{y}\comp \ker\bigl(\som{1_{W}}{0}\bigr)$, respectively, as in~\eqref{weight}. To see this, we consider the diagram with short exact rows
\[
\xymatrix{&& X \ar[d]^-{\iota_{X}} \ar@{.>}[ld]_-{\eta_{X}^{W}} \\
0 \ar[r] & W\flat X \ar@{.>}[d]_-{\xi} \ar@{{ |>}->}[r]_-{\kappa_{B,X}} & W+X \ar@{-{ >>}}[r]^-{\som{1_{W}}{0}} \ar[d]^-{\som{w}{x}} & W \ar[d]^-{d\circ w} \ar[r] & 0\\
0 \ar[r] & X \ar@{{ |>}->}[r]_-{x} & D \ar@{-{ >>}}[r]_-{d} & D_{0} \ar[r] & 0.}
\]
It is clear that $\som{1_{W}}{0}\comp \iota_{X}=0$ induces the factorisation $\eta_{X}^{W}$ of $\iota_{X}$ over the kernel $\kappa_{B,X}$ of $\som{1_{W}}{0}$. Similarly, since
\[
d\comp \som{w}{x}\comp\kappa_{B,X}=d\comp w\comp \som{1_{W}}{0}\comp\kappa_{B,X}
\]
is trivial we obtain the dotted factorisation $\xi$. Now
\[
x\comp \xi\comp \eta_{X}^{W}=\som{w}{x}\comp\kappa_{B,X}\comp \eta_{X}^{W}=\som{w}{x}\comp\iota_{X}=x,
\]
so $\xi\comp \eta_{X}^{W}=1_{X}$ because $x$ is a monomorphism. In particular, $\xi$ is a regular epimorphism. It follows that $x$ is the image of $\som{w}{x}\comp\kappa_{B,X}$.

We know from the above discussion that $x$ and $y$ commute over $w$ precisely when the triple $\bigl(\som{w}{x},w,\som{w}{y}\bigr)$ is admissible with respect to $\bigl(\som{1_{W}}{0},\iota_{W},\som{1_{W}}{0},\iota_{W}\bigr)$. Lemma~4.5 in~\cite{HVdL} now tells us that this happens if and only if the commutators $[X,Y]$ and $[X,Y,\Im(w)]$ vanish.
\end{proof}

Via Theorem~4.6 in~\cite{HVdL} we now recover the known result that the \emph{Smith is Huq} condition~\cite{MFVdL} holds if and only if, for any given cospan of normal monomorphisms $(x,y)$, the property of commuting over $w$ is independent of the chosen weight $w$ making $(x,y,w)$ a weighted cospan.

We also see that the $(W,w)$-weighted normal commutator $N[(X,x),(Y,y)]_{(W,w)}$ of $x$ and $y$ is the normal closure of $[X,Y]\join [X,Y,\Im(w)]$ in $D$, since these two normal subobjects satisfy the same universal property. We shall, however, not insist further on this, because we can obtain the following refinement (Theorem~\ref{Thm Decomposition}).

\begin{lemma}\label{Lemma Smash of Sum}
If $X$, $Y$, and $W$ are objects in a finitely cocomplete homological ca\-te\-gory, then there is a decomposition
\[
(X+Y)\diamond W \cong \bigl((X\diamond Y \diamond W)\rtimes (X\tensor W)\bigr)\rtimes (Y \tensor W).
\]
More precisely, there exists an object $V$ and split short exact sequences
\[
\xymatrix{0 \ar[r] & V \ar@{{ |>}->}[r] & (X+Y)\diamond W \ar@<.5ex>@{-{ >>}}[r] & Y \diamond W \ar[r] \ar@<.5ex>[l] & 0}
\]
and
\[
\xymatrix{0 \ar[r] & X\diamond Y \diamond W \ar@{{ |>}->}[r] & V \ar@<.5ex>@{-{ >>}}[r] & X \diamond W \ar[r] \ar@<.5ex>[l] & 0.}
\]
\end{lemma}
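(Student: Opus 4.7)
The plan is to exploit the functoriality of the co-smash product $(-)\diamond W$ (which is immediate from its definition as a kernel) together with the two canonical split epimorphisms $\som{0}{1_{Y}}\colon X+Y \to Y$ and $\som{1_{X}}{0}\colon X+Y \to X$, with sections $\iota_{Y}$ and $\iota_{X}$. Applying $(-)\diamond W$ to each produces split epimorphisms $(X+Y)\diamond W \to Y\diamond W$ and $(X+Y)\diamond W \to X\diamond W$; since in any pointed protomodular (in particular homological) category a split epimorphism fits into a split short exact sequence with its kernel, this at once yields the first sequence of the statement, with $V$ as its kernel, as well as an analogous sequence with some kernel $V'$ on the left.

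For the second sequence, I would restrict $(X+Y)\diamond W \to X\diamond W$ to $V$. The section $X\diamond W \to (X+Y)\diamond W$ factors through the inclusion $V\hookrightarrow (X+Y)\diamond W$, because the composite $X\diamond W \to (X+Y)\diamond W \to Y\diamond W$ is induced by $\som{0}{1_{Y}}\comp \iota_{X}=0$ and hence vanishes. This gives a split epimorphism $V \to X\diamond W$, producing a split short exact sequence $0\to K \to V \to X\diamond W \to 0$ for some kernel $K$.

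The crux is then to identify $K$ with $X\diamond Y\diamond W$. Using the alternative description $A\diamond B=\Ker(A+B\to A\times B)$ together with the way the maps above were produced, one finds
\[
V=\Ker\bigl(X+Y+W\to(X+Y)\times W\times(Y+W)\bigr)
\]
and
\[
V'=\Ker\bigl(X+Y+W\to(X+Y)\times W\times(X+W)\bigr),
\]
so $K=V\cap V'=\Ker\bigl(X+Y+W\to(X+Y)\times W\times(X+W)\times(Y+W)\bigr)$. The extra $W$-factor here is redundant: the projection $X+Y+W\to W$ factors through $X+Y+W\to Y+W$, so any morphism killed by the latter is automatically killed by the former. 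Hence
\[
K=\Ker\bigl(X+Y+W\to(X+Y)\times(X+W)\times(Y+W)\bigr)=X\diamond Y\diamond W,
\]
as desired.

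The main obstacle is not any single deep step but the bookkeeping: verifying that the kernel descriptions agree and that the splittings really do produce split short exact sequences in the homological setting. Beyond that, no commutator-theoretic input is required---everything follows from the formal behaviour of finite limits and the functoriality of co-smash products.
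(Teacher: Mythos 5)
Your argument is correct, and it is worth noting that the paper itself offers no proof of this lemma at all: it simply cites Lemma~2.12 of \cite{HVdL}, attributing the result to Hartl and Loiseau. So your self-contained derivation is genuinely different from (indeed, more informative than) what the paper provides. The steps all check out: $(-)\diamond W$ is a functor (it is the kernel of a natural transformation between functors of the first variable), so it preserves the split epimorphisms $\som{0}{1_{Y}}$ and $\som{1_{X}}{0}$ together with their sections; a split epimorphism with its kernel is a short exact sequence in any homological category, which gives the first sequence; the section $X\diamond W\to (X+Y)\diamond W$ factors through $V$ because $0\diamond W=0$, which gives the second sequence with kernel $K=V\meet V'$; and the identification of $K$ with $X\diamond Y\diamond W$ is exactly right, since an intersection of kernels is the kernel of the induced morphism into the product, $Y\diamond W\to Y+W$ is monic (so $V$ really is $\Ker\bigl(X+Y+W\to (X+Y)\times W\times(Y+W)\bigr)$), and the component $X+Y+W\to W$ factors through $X+Y+W\to Y+W$, making the $W$-factor redundant and leaving precisely the defining kernel of the ternary co-smash product. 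By way of comparison, the argument underlying the cited Lemma~2.12 (a fragment of which is reconstructed in the proof of Theorem~\ref{Thm Decomposition} of this paper) works with $\flat$-objects and kernels taken in two directions of a cube of split epimorphisms; your route via explicit kernel descriptions of the co-smash products is more elementary and avoids that bookkeeping, at the modest cost of re-verifying the standard facts about intersections of kernels that you flag yourself.
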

\begin{proof}
This is Lemma~2.12 in~\cite{HVdL}, a result which was first obtained by M.~Hartl and B.~Loiseau.
\end{proof}

\begin{theorem}\label{Thm Decomposition}
Given a weighted cospan~\eqref{weighted cospan} in a finitely cocomplete homological category, the $(W,w)$-weighted subobject commutator of $x$ and $y$ decomposes as
\[
[(X,x),(Y,y)]_{(W,w)}= [X,Y]\join [X,Y,\Im(w)].
\]
\end{theorem}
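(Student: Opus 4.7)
The plan is to describe the kernel $K$ in~\eqref{Kernel K} explicitly and then read off its image in $D$ under $\somm{w}{x}{y}$.

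I would consider the commutative square whose top row is the comparison $W+X+Y\twoheadrightarrow (W+X)\times_W(W+Y)$, whose bottom row is the identity on $W+X$, and whose verticals are the collapse $\som{1_{W+X}}{0}$ on the left and the pullback projection $\pi_1$ on the right. Both verticals are split regular epimorphisms, with kernels $(W+X)\flat Y$ upstairs and $W\flat Y$ downstairs (the kernel of a projection in a pullback being that of its partner $W+Y\to W$). A Noether-type argument identifies $K$ with the kernel of the induced morphism $(W+X)\flat Y\to W\flat Y$. Feeding the split exact sequence~\eqref{Smash as kernel of bemol} into both source and target, and noting that the induced map is the identity on the $Y$-quotient, I can replace $\flat$ by $\diamond$ and obtain
\[
K \cong \ker\bigl((W+X)\diamond Y\to W\diamond Y\bigr),
\]
the arrow being induced by $\som{1_W}{0}\colon W+X\to W$. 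Lemma~\ref{Lemma Smash of Sum}, applied with the roles of its arguments $X$ and $W$ swapped, then exhibits this kernel as the middle term of a split short exact sequence
\[
0\to X\diamond W\diamond Y \to K \to X\diamond Y \to 0.
\]

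Now I would compute the image of $K$ in $D$ under $\somm{w}{x}{y}$. In any protomodular pointed regular category, a split short exact sequence $0\to A\to B\to C\to 0$ with section $s$ gives a regular epimorphism $A+C\twoheadrightarrow B$ built from the kernel and section, so the image of any $f\colon B\to D$ equals $\Im(f|_A)\join\Im(f\comp s)$. In our setting, the section $X\diamond Y\to K$ is induced by functoriality of $(-)\diamond Y$ applied to $\iota_X\colon X\hookrightarrow W+X$; composing with $K\hookrightarrow (W+X)\diamond Y\hookrightarrow W+X+Y$ simply gives $X\diamond Y\hookrightarrow X+Y\hookrightarrow W+X+Y$, and its composite with $\somm{w}{x}{y}$ is $\som{x}{y}\comp\iota_{X,Y}$, of image $[X,Y]$ by definition. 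On the other side, unpacking the conditions that cut out the kernel $X\diamond W\diamond Y\hookrightarrow K$ inside $W+X+Y$ and comparing them with the defining equations of the ternary co-smash shows that this inclusion coincides, up to the obvious permutation of summands, with the canonical $\iota_{X,W,Y}$; factoring $w$ through its image and invoking functoriality of the ternary co-smash identifies the image of the ensuing composite with $\somm{w}{x}{y}$ as $[X,\Im(w),Y]=[X,Y,\Im(w)]$. Joining the two pieces yields precisely $[X,Y]\join[X,Y,\Im(w)]$.

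The main obstacle, as I see it, is verifying that the kernel and section supplied by Lemma~\ref{Lemma Smash of Sum} really match the canonical co-smash inclusions into $W+X+Y$; but this reduces to explicitly computing which elements are cut out by the various $\ker$ and $\diamond$ operations, so the difficulty is bookkeeping rather than anything substantive.
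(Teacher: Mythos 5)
Your argument is correct and follows essentially the same route as the paper: both identify $K$ with the middle term of a split short exact sequence $0\to X\diamond Y\diamond W\to K\to X\diamond Y\to 0$ by means of Lemma~\ref{Lemma Smash of Sum} (the paper via a $3\times 3\times 3$ diagram of split extensions, you via two successive Noether-type kernel identifications, which is only a presentational difference) and then compute the image in $D$ as the join of the images of the kernel and of the section. The one step to tighten is the final substitution of $\Im(w)$ for $W$: functoriality of the ternary co-smash product only yields the inclusion of the relevant image into $[X,Y,\Im(w)]$, and for equality you need that the ternary co-smash product is compatible with image factorisations—i.e.\ that ${X\diamond W\diamond Y\to X\diamond \Im(w)\diamond Y}$ is a regular epimorphism (Corollary~2.14 in~\cite{Actions}), which is exactly the fact the paper cites at this point.
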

\begin{proof}
We decompose the kernel $K$ of the short exact sequence~\eqref{Kernel K} into a join of the co-smash products $X\tensor Y$ and $X\tensor Y\tensor W$ considered as subobjects of $K$. The result then follows from the compatibility of the ternary co-smash product with image factorisations (Corollary 2.14 in~\cite{Actions} and the fact that co-smash products preserve monomorphisms). Indeed, the image of the composite ${X\tensor Y\tensor W\to W+X+Y\to D}$ is $[X,Y,\Im(w)]$, which is a subobject of $[(X,x),(Y,y)]_{(W,w)}$. It is easily seen that also $[X,Y]\leq [(X,x),(Y,y)]_{(W,w)}$ and that these two inclusions are jointly regular epic. 

Consider the cube of solid split epimorphisms
\[
\xymatrix@!0@C=3.5em@R=3.5em{&& K \ar@{.>}[ddd] \ar@{{ |>}.>}[rrr] \ar@{{ |>}.>}[dl] &&& (W+X)\flat Y \ar@{.>}[ddd] \ar@{{ |>}.>}[dl] \ar@{.>}[rrr] &&& W\flat Y \ar@{.>}[ddd] \ar@{{ |>}.>}[dl]\\
& (W+Y)\flat X \ar@{.>}[ddd] \ar@{.>}[dl] \ar@{{ |>}.>}[rrr] &&& W+X+Y \ar[rrr] \ar@{->}[ddd]|(.33){\hole} \ar[ld] &&& W+Y \ar[ddd] \ar[ld] \\
W\flat X \ar@{.>}[ddd] \ar@{{ |>}.>}[rrr] &&& W+X \ar[rrr] \ar[ddd] &&& W \ar[ddd]\\
&& X\tensor Y \ar@{{ |>}.>}[rrr] \ar@{{ |>}.>}[dl] &&& X\flat Y \ar@{{ |>}.>}[ld] \ar@{.>}[rrr] &&& Y \ar@{:}[dl] \\
& Y\flat X \ar@{.>}[dl] \ar@{{ |>}.>}[rrr] &&& X+Y \ar@{->}[rrr]|(.66){\hole} \ar@{->}[ld] &&& Y \ar[ld]\\
X \ar@{:}[rrr] &&& X \ar[rrr]  &&& 0}
\]
which, taking kernels horizontally, yields two $3\times 3$ diagrams (or, equivalently, a~$3\times 3$ diagram of vertical split epimorphisms). Note that the bottom one has $X\tensor Y$, and the top one $K$, in its back left corner. It suffices to prove that, taking kernels vertically now, we obtain the split exact sequence
\[
\xymatrix{0 \ar[r] & X\tensor Y\tensor W \ar@{{ |>}->}[r] & K \ar@<.5ex>@{-{ >>}}[r] & X\tensor Y \ar[r] \ar@<.5ex>[l] & 0}
\]
in the back left corner of the induced $3\times 3\times 3$ diagram. Taking vertical kernels of the front and middle sections of the diagram above, we already obtain a morphism
\[
\xymatrix{0 \ar[r] & U \ar[d] \ar@{{ |>}->}[r] & (X+Y)\flat W \ar@<.5ex>@{-{ >>}}[r] \ar[d] & Y\flat W \ar[d] \ar[r] \ar@<.5ex>[l] & 0 \\
0 \ar[r] & X\diamond W \ar@{{ |>}->}[r] & X\flat W \ar@<.5ex>@{-{ >>}}[r] & W \ar[r] \ar@<.5ex>[l] & 0}
\]
of short exact sequences. Using~\eqref{Smash as kernel of bemol} we see that the sequence
\[
\xymatrix{0 \ar[r] & U \ar@{{ |>}->}[r] & (X+Y)\diamond W \ar@<.5ex>@{-{ >>}}[r] & Y \diamond W \ar[r] \ar@<.5ex>[l] & 0}
\]
is split exact. Noting that $V$ in Lemma~\ref{Lemma Smash of Sum} is the object $U$, we see that the co-smash product $X\diamond Y \diamond W$ must coincide with the kernel of $U\to X\diamond W$, which we already know coincides with the needed kernel of $K\to X\diamond Y$.
\end{proof}

\subsection*{Acknowledgement}
We would like to thank the referee who guided us through a major revision of the article and thus improved the presentation considerably.



\providecommand{\noopsort}[1]{}
\providecommand{\bysame}{\leavevmode\hbox to3em{\hrulefill}\thinspace}
\providecommand{\MR}{\relax\ifhmode\unskip\space\fi MR }
\providecommand{\MRhref}[2]{%
  \href{http://www.ams.org/mathscinet-getitem?mr=#1}{#2}
}
\providecommand{\href}[2]{#2}

\end{document}